\newtheorem{theorem}{Theorem}
\newtheorem{claim}[theorem]{Assertion}
\renewcommand{\le}{\leqslant}
\renewcommand{\ge}{\geqslant}
\renewcommand{\geq}{\geqslant}
\newcommand{\abs}[1]{\left\lvert#1\right\rvert}
\newcommand{\bigo}[1]{O\mathopen{}\left(#1\right)}
\newtheorem{theo2}{Theorem}
\begin{document}

\author{Stéphane Bessy} \author{Daniel Gon\c{c}alves}
\address[S. Bessy, D. Gon\c{c}alves]{AlGCo, LIRMM,
  Universit\'e Montpellier 2, France}
\email{\{goncalves,bessy\}@lirmm.fr}

\author{Jean-Sébastien Sereni}
\address[J.-S. Sereni]{Centre National de la Recherche Scientifique (LORIA),
  Vand\oe uvre-lès-Nancy, France.}
\thanks{The second and third author's work were partially supported by the
  French \emph{Agence Nationale de la Recherche} under references
  \textsc{ANR-12-JS02-002-01} and \textsc{anr 10 jcjc 0204 01}, respectively.}
\email{sereni@kam.mff.cuni.cz}

\title{Two floor building needing eight colors}

\date{}

\begin{abstract}
  Motivated by frequency assignment in office blocks, we study the chromatic
  number of the adjacency graph of $3$-dimensional parallelepiped
  arrangements. In the case each parallelepiped is within one floor, a direct
  application of the Four-Colour Theorem yields that the adjacency graph has
  chromatic number at most $8$. We provide an example of such an arrangement
  needing exactly $8$ colours.  We also discuss bounds on the chromatic number
  of the adjacency graph of general arrangements of $3$-dimensional
  parallelepipeds according to geometrical measures of the parallelepipeds
  (side length, total surface or volume).
\end{abstract}
\maketitle

\section{Introduction}
The Graph Colouring Problem for Office Blocks was raised by BAE Systems at the
53rd European Study Group with Industry in 2005~\cite{AGG+05}.  Consider an
office complex with space rented by several independent organisations.  It is
likely that each organisation uses its own wireless network (WLAN) and ask for
a safe utilisation of it.  A practical way to deal with this issue is to use a
so-called ``stealthy wallpaper'' in the walls and ceilings shared between
different organisations, which would attenuate the relevant frequencies. Yet,
the degree of screening produced will not be sufficient if two distinct
organisations have adjacent offices, that is, two offices in face-to-face
contact on opposite sides of just one wall or floor-ceiling. In this case, the
WLANs of the two organisations need to be using two different channels (the
reader is referred to the report by Allwright \emph{et al.}~\cite{AGG+05} for
the precise technical motivations).

This problem can be modeled as a graph coloring problem by building a
\emph{conflict graph} corresponding to the office complex: to each
organisation corresponds a vertex, and two vertices are adjacent if the
corresponding territories share a wall, floor, or ceiling area. The goal is to
assign a color (frequency) to each vertex (organisation) such that adjacent
vertices are assigned distinct colors. In addition, not every graph may occur
as the conflict graph of an existing office complex. However, the structure of
such conflict graphs is not clear and various fundamental questions related to
the problem at hands were asked.  Arguably, one of the most natural questions
concerns the existence of bounds on the chromatic number of such conflict
graphs. More specifically, which additional constraints one should add to the
model to ensure ``good'' upper bounds on the chromatic number of conflict
graphs? These additional constraints should be meaningful regarding the
practical problem, reflecting as much as possible real-world situations.
Indeed, as noted by Tietze~\cite{Tie05}, complete graphs of arbitrary size are
conflict graphs, that is, for every integer $n$, there can be $n$
organisations whose territories all are in face-to-face contact with each
other. The reader is referred to the paper by Reed and Allwright~\cite{ReAl08}
for a description of Tietze's construction.  Besicovitch~\cite{Bes47} and
Tietze~\cite{Tie65} proved that this is still the case if the territories are
asked to be convex polyhedra.

An interesting condition is when the territories are required to be
\emph{rectangular parallelepipeds} (sometimes called \emph{cuboids}), that is,
a $3$-dimensional solid figure bounded by six rectangles aligned with a fixed
set of Cartesian axes. For convenience, we shall call \emph{box} a rectangular
parallelepiped. When all territories are boxes, the \emph{clique number} of
any conflict graph, that is, the maximum size of a complete subgraph, is at
most $4$.  However, Reed and Allwright~\cite{ReAl08} and also later Magnant
and Martin~\cite{MaMa11} designed arrangements of boxes that yield conflict
graphs requiring an arbitrarily high number of colours.

\noindent On the other hand, if the building is assumed to have floors (in the
usual way) and each box is \emph{$1$-floor}, i.e. restricted to be within one
floor, then the chromatic number is bounded by $8$: on each floor, the
obtained conflict graph is planar and hence can be coloured using $4$
colours~\cite{ApHa77,AHK77}.  It is natural to ask whether this bound is
tight. As noted during working sessions in Oxford (see the acknowledgments),
it can be shown that up to $6$ colours can be needed, by using an arrangement
of boxes spanning three floors. Such a construction is shown in
Figure~\ref{fig:6}.

\begin{figure}[!ht]
    \begin{center}
\begin{tikzpicture}[line width = 1mm]
\tikzset{BarreStyle/.style = {opacity=.4,line width=4 mm,line cap=round,color=#1}}
    \draw (0,0) -- (0,4) -- (5,4) -- (5,0) -- cycle ;
    \foreach \i in {1,2,3,4} { \draw (\i,1) -- (\i,3); }
    \draw (2,2)--(3,2);
    \draw (0,1)--(5,1);
    \draw (0,3)--(5,3);

\draw [line width=.3mm,densely dotted] (-.5,-.5)--(-.5,4.5)--(5.5,4.5)--(5.5,-.5)--cycle;
\draw [line width=.3mm,densely dotted] (2.5,4.4)--(2.5,-.45);
\draw [line width=.5mm, loosely dashed] (.5,.5)--(.5,3.5)--(4.5,3.5)--(4.5,.5)--cycle;
\end{tikzpicture}
\end{center}
\caption{An arrangement of $1$-floor boxes spanning three floors and requiring
six colours. The solid, dotted, and dashed lines indicate the middle, top, and
bottom floors, respectively.}\label{fig:6}
\end{figure}

The purpose of this note is to show that the upper bound is actually tight.
More precisely, we shall build an arrangement of $1$-floor boxes that spans
two floors and yields a conflict graph requiring $8$ colours. From now on, we
shall identify a box arrangement with its conflict graph for convenience. In
particular, we assign colors directly to the boxes and define the
\emph{chromatic number of an arrangement} as that of the associated conflict
graph.
\begin{theorem}
\label{thm:8}
  There exists an arrangement of $1$-floor boxes spanning two floors with
  chromatic number $8$.
\end{theorem}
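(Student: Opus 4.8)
The plan is to exhibit an explicit arrangement $\mathcal A$ of $1$-floor boxes on two floors and to prove that its conflict graph admits no proper $7$-colouring; with the general upper bound of $8$ recalled above, this yields $\chi(\mathcal A)=8$. Before designing anything I would pin down the geometry, since it dictates what is available. As each box occupies the full height of its floor, two boxes on the same floor are adjacent exactly when their horizontal footprints -- axis-parallel rectangles -- share a boundary segment of positive length, so each floor alone induces a planar graph; a short case check shows moreover that four axis-parallel rectangles can never be pairwise edge-adjacent, so each floor even induces a $K_4$-free graph. A bottom box and a top box are adjacent exactly when their footprints overlap in positive area. Using that three pairwise edge-adjacent rectangles share a common corner point, one then sees that the conflict graph contains no $K_5$, hence no clique of size $\ge 5$: an $8$-chromatic arrangement cannot be certified by a clique, and genuine gadgetry will be needed.

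The arrangement I would build combines, on each floor, a \emph{forcing gadget} -- a rectangle packing whose conflict graph is $4$-chromatic, conveniently assembled from odd wheels $W_{2k+1}$ (a central box ringed by a cycle of boxes), which are easily realised by rectangles and can be chained, and arranged so that a few distinguished boxes are tightly constrained in every proper colouring -- together with a \emph{linking region}: a horizontal square over which the two floors are cut by two transverse families of boxes, engineering a prescribed set of inter-floor edges that tie the distinguished boxes of the bottom gadget to those of the top gadget. The target behaviour is that in any proper colouring the bottom gadget spends four colours on its distinguished boxes, the inter-floor edges push the top distinguished boxes off all four, and the top gadget then spends four further colours on them, so that $8$ colours become unavoidable. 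Realising this skeleton is delicate precisely because the obvious shortcuts are unavailable: one cannot plant a rainbow $K_4$, nor make the two floors' palettes literally disjoint through a complete bipartite overlap, since single-floor graphs are planar and $K_4$-free and the overlap relation alone is merely bipartite. Every colour constraint must instead be propagated through legitimate, interior-disjoint rectangle configurations, with care that no accidental extra adjacency spoils the argument.

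I expect the main obstacle to be the verification rather than the idea. Assuming a proper $7$-colouring of the conflict graph, one runs a finite case analysis over the colour classes of a handful of key boxes -- the hubs of the wheels and the distinguished boxes inside the linking region -- tracks the forced implications through each gadget and through the overlaps, and checks that on every branch some box ends up with all seven colours already present in its neighbourhood. Keeping the arrangement compact enough, and its adjacency pattern clean enough, that this case analysis is at once correct and checkable by hand is where most of the work goes; the bulk of the write-up would be that analysis, accompanied by explicit figures fixing the two floor plans.
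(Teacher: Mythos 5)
Your proposal correctly recalls the upper bound of $8$, correctly identifies the geometric constraints (each floor induces a planar, even $K_4$-free, graph; the inter-floor adjacency is a bipartite overlap relation), and correctly diagnoses the central obstruction: no small rigid structure such as a rainbow $K_4$ or a complete bipartite join of two rainbow quadruples is realisable, so the lower bound must be engineered indirectly. But the proposal stops at the diagnosis. No arrangement is exhibited: the ``forcing gadget'' is described only as ``conveniently assembled from odd wheels'' with unspecified distinguished boxes, the ``linking region'' is described only by its intended effect, and the entire lower-bound argument is deferred to ``a finite case analysis'' that is never carried out. As written, this is a research plan, not a proof; nothing in it certifies that a configuration with the claimed forcing behaviour exists.

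The missing idea is precisely the mechanism that gets around the obstruction you identify, and it is worth naming because your sketch, taken literally, would likely fail. You propose to force four \emph{distinguished boxes} on the bottom floor to receive four distinct colours and then push four top boxes off all of them; but in a planar, $K_4$-free floor one cannot force any fixed small set of boxes to be rainbow in \emph{every} proper colouring. The paper instead works with \emph{regions} (long thin strips, i.e.\ sets of many boxes) and with the weaker, disjunctive property that in every proper colouring of a gadget $Y$, \emph{at least one of three} designated regions receives at least four colours (Assertion~\ref{claim-Y}, itself derived from a signature analysis of a smaller gadget $X$). Since one does not control \emph{which} region is rich, a combinatorial amplification is needed: seven parallel copies of $Y$ on the first floor guarantee, by pigeonhole, three copies rich in the \emph{same} region index $j$; fifteen transverse copies on the second floor are positioned so that whichever region of the relevant transverse copy turns out to be rich, it \emph{fully overlaps} one of the three rich bottom regions, and full overlap of two regions each carrying four colours forces eight colours. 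Your plan has no analogue of this disjunction-plus-pigeonhole alignment, and without it the step ``the inter-floor edges push the top distinguished boxes off all four'' has no justification. To complete the proof you would need either to reproduce such a mechanism or to supply a genuinely different concrete construction together with the verified case analysis; neither is present.
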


The boxes considered in Theorem~\ref{thm:8} have one of their geometrical
measures bounded: their height is at most one floor.  We also discuss bounds
on the chromatic number of box arrangements with respect to some other
geometrical measures: the side lengths, the surface area and the volume. More
precisely, assuming that boxes have integer coordinates, we obtain the
following.
\begin{theorem}
 \label{thm:geom}
We consider a box arrangement $A$ with integer coordinates.
\begin{enumerate}
   \item If there exists one fixed dimension such that every box in
      $A$ has length at most $\ell$ in this dimension, then
      $A$ has chromatic number at most $4(\ell+1)$.
   \item If for each box, there is one dimension such that the
      length of this box in this dimension if at most $\ell$, then $A$
      has chromatic number at most $12(\ell+1)$.
   \item If the total surface area of each box in $A$ is
      at most $s$, then $A$ has chromatic number at most
      $9\sqrt[3]{4s}+13$.
   \item If the volume of each box in $A$ is at most $v$,
      then $A$ has chromatic number at most $24\sqrt[4]{6v}+13$.
\end{enumerate}
\end{theorem}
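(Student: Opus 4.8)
The plan is to prove the four bounds in order, using part (1) inside part (2) and part (2) inside parts (3)--(4).

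For part (1), take the distinguished dimension to be the $x$-axis and write the $x$-range of a box $B$ as $[a_B,a_B+w_B]$ with $1\le w_B\le\ell$. Colour $B$ by $a_B\bmod(\ell+1)$; I claim each colour class induces a planar graph. If $B$ and $B'$ share a class and $a_B\ne a_{B'}$, then $\abs{a_B-a_{B'}}\ge\ell+1>w_B$, their $x$-ranges are disjoint, and $B,B'$ cannot share a face; so every conflicting pair in a class has $a_B=a_{B'}$. For such a pair the projections of $B$ and $B'$ to the $yz$-plane are interior-disjoint rectangles (otherwise the boxes would overlap in positive volume), a shared face perpendicular to the $x$-axis is impossible, and hence $B,B'$ conflict exactly when these rectangles touch along a segment of positive length. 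Since the contact graph of finitely many interior-disjoint axis-parallel rectangles is planar (put a vertex inside each rectangle and route an edge across each shared segment, using convexity to avoid crossings), the Four-Colour Theorem colours each class with $4$ colours, and $\ell+1$ disjoint palettes give $4(\ell+1)$. Part (2) is then immediate: assign each box to a subarrangement $A_x$, $A_y$ or $A_z$ according to a dimension of length at most $\ell$, colour each $A_d$ by part (1), and use three disjoint palettes.

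For parts (3) and (4), I would fix a threshold $\ell$ and split $A$ into the \emph{thin} boxes, having some dimension of length $\le\ell$, which part (2) colours with $12(\ell+1)$ colours, and the \emph{fat} boxes, all of whose dimensions have length $>\ell$. The geometric hypothesis confines every fat dimension to a window $(\ell,U]$: if the surface area is $\le s$ then $2w_zw_x\le s$ and $2w_zw_y\le s$, so $w_x>\ell$ and $w_y>\ell$ force $w_z<s/(2\ell)$ and similarly for the other two sides, giving $U=s/(2\ell)$; if the volume is $\le v$ then $w_z\le v/(w_xw_y)<v/\ell^2$, giving $U=v/\ell^2$. I would then colour the fat boxes by grouping them according to $j:=\floor{z_0/\ell}$, where $[z_0,z_0+w_z]$ is the $z$-range. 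Two fat boxes in one group satisfy $\abs{z_0-z_0'}<\ell<\min(w_z,w_z')$, so their $z$-ranges overlap in positive length, which, exactly as in part (1), makes their $xy$-projections interior-disjoint, forbids a shared face perpendicular to the $z$-axis, and turns the conflict graph of each group into a planar rectangle-contact graph; hence each group is $4$-colourable from one fixed set of four colours. Moreover, if a fat box of group $j$ conflicts with one of group $j'$, then the two are flush along $z$ or have overlapping $z$-ranges, and in either case $0<\abs{j-j'}\le D$ for some $D=O(U/\ell)$; so colouring a fat box of group $j$ by the pair (its colour inside its group, $\,j\bmod(D+1))$ is proper and uses $4(D+1)$ colours. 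Altogether $\chi(A)\le 12(\ell+1)+4(D+1)$ with $D=O(s/\ell^2)$ in case (3) and $D=O(v/\ell^3)$ in case (4); choosing $\ell$ of order $s^{1/3}$, respectively $v^{1/4}$, balances the two terms and yields bounds of the stated orders, the precise constants $9\sqrt[3]{4s}+13$ and $24\sqrt[4]{6v}+13$ coming out of a careful optimisation of $\ell$ together with some overlap of palettes.

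The delicate point is the fat-box colouring. First, one must be certain that grouping by $\floor{z_0/\ell}$ really does make each group planar; this rests on the interplay of the lower bound $w_z>\ell$ on a fat box's height with the fact that the $z_0$'s in a single group lie within $\ell$ of each other, which is what leaves two such boxes no room to meet only along a $z$-face and so forces interior-disjoint cross sections — the same phenomenon that underlies part (1). Second, one must extract the bound $D=O(U/\ell)$ on how far in $j$ a fat box can reach, and it is here that the window width $U$ enters divided by $\ell$ rather than on its own; that single extra factor of $\ell$ is precisely what lowers the exponent below the value given by the crude estimate $\chi\le 12(\ell+1)+4(U+1)$ — namely from $\tfrac12$ to $\tfrac13$ in case (3) and from $\tfrac13$ to $\tfrac14$ in case (4).
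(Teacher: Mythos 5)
Your parts (1) and (2) coincide with the paper's argument: layer the boxes by the left endpoint of the bounded dimension modulo $\ell+1$, observe that each layer induces a disjoint union of contact graphs of interior-disjoint axis-parallel rectangles and is therefore planar, apply the Four-Colour Theorem, and for (2) split $A$ into three families according to which dimension is bounded. For parts (3) and (4) you take a genuinely different route. The paper keeps your thin/fat split but handles the fat boxes by a degree count rather than by a second layering: a fat box $B$ has surface area at most $s$, each fat neighbour whose contact region is a full face of that neighbour occupies at least $\ell^2$ of $B$'s surface (so at most $s/\ell^2$ of them), and every other fat neighbour meets an edge of $B$, at most $w/\ell+1$ per edge of length $w$, which sums to at most $2s/\ell^2+12$ using $2\ell(x+y+z)\le s$. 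Hence the fat subgraph has maximum degree at most $3s/\ell^2+12$ and is $(3s/\ell^2+13)$-colourable greedily, and (4) follows by noting that a fat box of volume at most $v$ has surface area at most $6v/\ell$ and invoking (3). Your alternative --- grouping the fat boxes by $\floor{z_0/\ell}$, checking that the $z$-ranges within a group pairwise overlap so that each group is a planar rectangle-contact graph, and then multiplying by $j\bmod(D+1)$ with $D\le U/\ell+1$ bounding the reach of a cross-group conflict --- is correct, and in fact gives a better leading constant: $6\sqrt[3]{9s}$ versus $9\sqrt[3]{4s}$, and $16\sqrt[4]{v}$ versus $24\sqrt[4]{6v}$.

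The one genuine shortfall is where you write that the precise constants come ``out of a careful optimisation of $\ell$ together with some overlap of palettes.'' Your construction costs $12(\ell+1)$ for the thin boxes plus $4(D+1)\le 8+2s/\ell^2$ for the fat ones, i.e.\ $12\ell+2s/\ell^2+20$ in case (3), which optimises to $6\sqrt[3]{9s}+20$. This is below $9\sqrt[3]{4s}+13$ only once $s\gtrsim 58$, so for small $s$ your argument as written does not recover the stated bound; and no palette overlap is available, since a thin box can conflict with a fat one. (In case (4) your $16\sqrt[4]{v}+20$ does beat $24\sqrt[4]{6v}+13$ for every $v\ge1$, so that part is complete.) To obtain the theorem literally you would need either to treat the small-$s$ range separately or to replace the fat-box layering by the paper's degree-count, whose additive constant is $13$.
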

\noindent In the next section, we give the proof of Theorem~\ref{thm:8} and in
the last section we indicate how to obtain the bounds given in
Theorem~\ref{thm:geom}.

\section{Proof of Theorem~\ref{thm:8}}
We shall construct an arrangement of $1$-floor boxes that is not
$7$-colorable.  Before that, we need the following definition.  Consider an
arrangement $A$ and let $A_1$, $A_2$ and $A_3$ be (non-necessarily disjoint)
subsets of the boxes in $A$. Given a proper coloring $c$ of $A$, let $C_i$ be
the set of colors used for the boxes in $A_i$, for $i\in\{1,2,3\}$.  The
\emph{signature $\sigma_A(c)$ of $A$ with respect to $c$} is defined to be
$a_xb_yc$, where $a\coloneqq\abs{C_1}$, $x\coloneqq\abs{C_1 \cup C_2}$,
$b\coloneqq\abs{C_2}$, $y\coloneqq\abs{C_2 \cup C_3}$, and
$c\coloneqq\abs{C_3}$.  The collection of all signatures can be endowed with a
partial order: if $s=a_xb_yc$ and $s'=a'_{x'}b'_{y'}c'$ are two signatures,
then $s\le s'$ if $a\le a'$, $x\le x'$, $b\le b'$, $y\le y'$ and $c\le c'$.

To build the desired arrangement, we use the arrangement $X$ of $1$-floor
boxes described in Figure~\ref{fig-X} as a building brick. The arrangement $X$
has three specific \emph{regions}, $X_1$, $X_2$ and $X_3$. We also abusively
write $X_1$, $X_2$ and $X_3$ to mean the subsets of boxes of $X$ respectively
intersecting the regions $X_1$, $X_2$ and $X_3$ (note that some boxes may
belong to several subsets). We start by giving some properties of the
signatures of $X$ with respect to proper colorings and according to the three
subsets  $X_1$, $X_2$ and $X_3$.

\begin{figure}[!ht]
\begin{center}
\begin{tikzpicture}[line width=1mm,rotate=90]
\tikzset{BarreStyle/.style = {opacity=.3,line width=6 mm,color=#1}}
    \draw (0,0) -- (3,0) -- (3,5) -- (0,5) -- cycle ;
    \draw (0,1) -- (3,1) ;
    \draw (0,2) -- (3,2) ;
    \draw (2,3) -- (3,3) ;
    \draw (0,4) -- (3,4) ;
    \draw (0,5) -- (3,5) ;
    \draw (1,2) -- (1,4) ;
    \draw (2,2) -- (2,4) ;
\draw [BarreStyle=red] (.5,4.8) to (.5,.2) ;
\draw [BarreStyle=blue] (1.5,4.8) to (1.5,.2) ;
\draw [BarreStyle=green] (2.5,4.8) to (2.5,.2) ;

\draw node at (.5,-.4) {$X_1$};
\draw node at (1.5,-.4) {$X_2$};
\draw node at (2.5,-.4) {$X_3$};
\end{tikzpicture}
\end{center}
\caption{The gadget $X$ with the regions $X_1$, $X_2$ and $X_3$.}
\label{fig-X}
\end{figure}

\begin{claim}\label{claim-X}
  For every proper coloring $c$ of $X$,
  \begin{enumerate}
     \item $\sigma_X(c) \geq 3_32_44$,\label{X33244}
     \item $\sigma_X(c) \geq 3_43_32$, or\label{X34332}
     \item $\sigma_X(c) \geq 2_33_43$.\label{X23343}
  \end{enumerate}
\end{claim}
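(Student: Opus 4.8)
The plan is to translate Figure~\ref{fig-X} into a $7$-vertex conflict graph and then run a short case analysis on the colour classes $C_1,C_2,C_3$. Reading the figure, $X$ has two ``full-width'' boxes $B_1$ and $B_2$ at one end, a middle band consisting of a left box $B_3$ (meeting only region $X_1$), a central box $B_4$ (meeting only $X_2$) and two stacked boxes $B_5$ (lower) and $B_6$ (upper) on the right (meeting only $X_3$), and one ``full-width'' box $B_7$ at the other end; thus $B_1,B_2,B_7$ lie in all three of $X_1,X_2,X_3$. The conflicts are $B_1B_2$; $B_2B_3$, $B_2B_4$, $B_2B_5$; $B_3B_4$, $B_3B_7$; $B_4B_5$, $B_4B_6$, $B_4B_7$; $B_5B_6$; and $B_6B_7$. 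In particular $\{B_2,B_3,B_4\}$ and $\{B_4,B_5,B_6\}$ are triangles and $\{B_2,B_3,B_4,B_7\}$ induces $K_4$ minus the edge $B_2B_7$.

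Fix a proper colouring $c$ and normalise $c(B_1)=1$, $c(B_2)=2$. First I record the bounds valid for every colouring: since $\{1,2\}\subseteq C_i$ we get $a,b,c\geq 2$; since $\{B_2,B_3,B_4\}\subseteq X_1\cup X_2$ is a triangle, $x\geq 3$; since $\{B_4,B_5,B_6\}\subseteq X_2\cup X_3$ is a triangle, $y\geq 3$. Using these, the three alternatives to be proved reduce, respectively, to: $a\geq 3$, $y\geq 4$ and $c\geq 4$; or $a\geq 3$, $x\geq 4$ and $b\geq 3$; or $b\geq 3$, $y\geq 4$ and $c\geq 3$.

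The argument then splits on $b=\abs{C_2}$ and on $x=\abs{C_1\cup C_2}$, each branch being forced by a short chain of deductions. If $b=2$, then $C_2=\{1,2\}$ forces $c(B_4)=1$ and hence (edges $B_3B_7$, $B_4B_7$) $c(B_7)=2$; then $c(B_3)\notin\{1,2\}$, and $c(B_5),c(B_6)$ are distinct colours outside $\{1,2\}$, so $C_3=\{1,2,c(B_5),c(B_6)\}$ has size $4$ and $C_1$ has size $3$, which yields $a=3$, $c=4$, $y=4$: the first alternative. If $b\geq 3$ and $x=3$, write $C_1\cup C_2=\{1,2,\gamma\}$; the triangle $B_2B_3B_4$ and the edges at $B_7$ force $c(B_7)=2$, then $c(B_4)=\gamma$ (otherwise $b=2$) and $c(B_3)=1$, after which $c(B_5),c(B_6)$ are distinct and avoid $\{2,\gamma\}$, hence contribute a fourth colour both to $C_2\cup C_3$ and to $C_3$; this gives $b\geq 3$, $y\geq 4$, $c\geq 3$: the third alternative. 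Finally, if $b\geq 3$ and $x\geq 4$, it remains only to check $a\geq 3$: if $a=2$ then $C_1=\{1,2\}$ forces $c(B_7)=2$ and $c(B_3)=1$, whence $c(B_4)\notin\{1,2\}$ and $C_1\cup C_2=\{1,2,c(B_4)\}$ has size $3$, contradicting $x\geq 4$; so $a\geq 3$ and the second alternative holds. Since $b\geq 2$ and $x\geq 3$ always, these cases are exhaustive.

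I do not anticipate a real obstacle: the whole proof is a finite verification, and the only point requiring care is bookkeeping which colour coincidences are forced by which edges --- in particular one must not assume $B_7$ receives a fresh colour, since $c(B_7)$ may equal $c(B_1)$ or $c(B_2)$. As a consistency check one can exhibit, for each of the three alternatives, a proper colouring of $X$ whose signature equals exactly the stated triple, which also shows that none of the three bounds can be strengthened.
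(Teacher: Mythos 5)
Your proposal is correct, and it supplies a proof that the paper deliberately omits (the authors only hint that one may case on the colours of the three full-width boxes, up to renaming $1,2,1$ / $1,1,2$ / $1,2,3$). I checked your reading of Figure~\ref{fig-X} against the construction: the seven boxes, the eleven listed conflicts, and the membership of $B_1,B_2,B_7$ in all three regions (with $B_3$, $B_4$, $\{B_5,B_6\}$ private to $X_1$, $X_2$, $X_3$ respectively) all match, and this is the only place where the argument could silently go wrong. The two preliminary bounds ($a,b,c\ge 2$ from $B_1,B_2$ being in every region; $x,y\ge 3$ from the triangles $B_2B_3B_4$ and $B_4B_5B_6$) are right, and the reduction of the three alternatives to $(a\ge3,y\ge4,c\ge4)$, $(a\ge3,x\ge4,b\ge3)$, $(b\ge3,y\ge4,c\ge3)$ is exactly what remains to be shown. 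Each of your three branches is forced as you describe: when $b=2$ the colours of $B_4,B_7$ are pinned to $1,2$, which expels $B_3,B_5,B_6$ from $\{1,2\}$ and gives $a=3$, $c=y=4$; when $b\ge3$ and $x=3$ the triangle plus $B_7$'s two neighbours force $c(B_7)=2$, $c(B_4)=\gamma$, $c(B_3)=1$, and then $B_5,B_6$ contribute a colour outside $\{1,2,\gamma\}$, giving $y\ge4$ and $c\ge3$ (your phrase ``fourth colour'' for $C_3$ is slightly loose --- it is the third colour of $C_3$ beyond $\{1,2\}$ --- but the conclusion $c\ge3$ is what is needed and is correct); and when $b\ge3$, $x\ge4$ the assumption $a=2$ collapses $C_1\cup C_2$ to three colours, a contradiction. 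The split on $b$ and then on $x$ is exhaustive since $b\ge2$ and $x\ge3$ always hold. Your organisation by the signature entries is a different, and arguably cleaner, bookkeeping than the paper's suggested casing on $c(B_1),c(B_2),c(B_7)$, but both are finite verifications of the same graph; yours has the advantage of making visible exactly which entry of the signature each deduction feeds.
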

The proof of this assertion does not need any insight, we thus omit it.
However, the reader interested in checking its accuracy should first note that
one can restrict to the cases where the three vertical (in Figure~\ref{fig-X})
boxes are respectively colored either $1$, $2$ and $1$; or $1$, $1$ and $2$;
or $1$, $2$ and $3$.

\begin{figure}[!ht]
\begin{center}
\begin{tikzpicture}[line width = 1mm,scale=.78]
\tikzset{BarreStyle/.style = {opacity=.3,line width=6 mm,color=#1}}

\draw (0,0)--(15,0)--(15,5)--(0,5)--cycle;

\foreach \i in {1,3,4,5,6,8,9,10,11,13,14} { \draw (\i,0)--(\i,5) ; }

\draw (1,2)--(3,2);
\draw (1,3)--(3,3);
\draw (2,3)--(2,5);

\draw (6,1)--(8,1);
\draw (6,3)--(8,3);
\draw (7,3)--(7,5);

\draw (11,4)--(13,4);
\draw (11,2)--(13,2);
\draw (12,0)--(12,2);

\draw [BarreStyle=red] (0.2,1.5) to (14.8,1.5) ;
\draw [BarreStyle=blue] (0.2,2.5) to (14.8,2.5) ;
\draw [BarreStyle=green] (0.2,3.5) to (14.8,3.5) ;

\draw node at (-.5,1.5) {$Y_1$};
\draw node at (-.5,2.5) {$Y_2$};
\draw node at (-.5,3.5) {$Y_3$};

\draw [decorate,decoration={brace,amplitude=5pt},line width=.3mm] (0.1,5.2) -- (4.9,5.2) node[midway,above=.4em] {$X^1$};
\draw [decorate,decoration={brace,amplitude=5pt},line width=.3mm] (5.1,5.2) -- (9.9,5.2) node[midway,above=.4em] {$X^2$};
\draw [decorate,decoration={brace,amplitude=5pt},line width=.3mm] (10.1,5.2) -- (14.9,5.2) node[midway,above=.4em] {$X^3$};
\end{tikzpicture}
\end{center}
\caption{The gadget $Y$ with the regions $Y_1$, $Y_2$ and $Y_3$.}
\label{fig-Y}
\end{figure}

Now, the arrangement $Y$ is obtained from three copies $X^1$, $X^2$ and $X^3$
of the arrangement $X$. We define three regions $Y_1$, $Y_2$ and $Y_3$ on $Y$
as depicted in Figure~\ref{fig-Y}.  As previously, we also write $Y_1$, $Y_2$
and $Y_3$ for the subsets of boxes intersecting the region $Y_1$, $Y_2$ and
$Y_3$, respectively. We set $X_i^j\coloneqq Y_i\cap X^j$ for
$(i,j)\in\{1,2,3\}^2$.

\begin{claim}
\label{claim-Y}
In any proper coloring of $Y$, at least four colors are used in one
of the three regions.
\end{claim}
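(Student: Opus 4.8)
The plan is to argue by contradiction: suppose there is a proper coloring $c$ of $Y$ that uses at most three colors in each of the three regions $Y_1$, $Y_2$, $Y_3$. The key observation is that, for each copy $X^j$, the restriction of $c$ to $X^j$ is a proper coloring of $X$, and the triple of regions $(X^j_1, X^j_2, X^j_3) = (X^j \cap Y_1, X^j \cap Y_2, X^j \cap Y_3)$ plays the role of $(X_1, X_2, X_3)$ in Assertion~\ref{claim-X}. Hence for each $j \in \{1,2,3\}$ the signature $\sigma_{X^j}(c)$ is at least one of $3_32_44$, $3_43_32$, or $2_33_43$ in the partial order defined above. Since there are three copies and three possible ``types'' of lower bound, I would like to combine these across the three copies to force four colors into some $Y_i$; the geometry of $Y$ (see Figure~\ref{fig-Y}) is arranged precisely so that the three copies overlap in the regions $Y_1, Y_2, Y_3$ in a way that makes this combination work.

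First I would fix notation for the color sets: let $D_i$ be the set of colors used on $Y_i$, so $\abs{D_i} \le 3$ for each $i$ by assumption, and let $C^j_i$ be the set of colors used on $X^j_i$, so that $C^j_i \subseteq D_i$ for all $i,j$. The signature of $X^j$ is built from $\abs{C^j_1}$, $\abs{C^j_1 \cup C^j_2}$, $\abs{C^j_2}$, $\abs{C^j_2 \cup C^j_3}$, $\abs{C^j_3}$. From Assertion~\ref{claim-X} applied to each of $X^1, X^2, X^3$, each copy falls into (at least) one of three cases. I would then do a case analysis on which of the three options of Assertion~\ref{claim-X} is realized by each copy (up to the cyclic/reflective symmetry of the picture, and using the pigeonhole principle when two copies share the same option). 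The point of each case is to show that the inequalities $\abs{C^j_1 \cup C^j_2} \ge 4$ or $\abs{C^j_2 \cup C^j_3} \ge 4$ cannot all be dodged when $\abs{D_1}, \abs{D_2}, \abs{D_3} \le 3$: for instance if some $X^j$ satisfies option~\ref{X33244}, then $\abs{C^j_2 \cup C^j_3} \ge 4$, but $C^j_2 \cup C^j_3 \subseteq D_2 \cup D_3$, so $\abs{D_2 \cup D_3} \ge 4$; this alone is not yet a contradiction (since $D_2$ and $D_3$ are different regions), so the real work is to track how the color sets of the three copies interact inside a single region $Y_i$.

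The heart of the argument is therefore bookkeeping on the union $D_i = C^1_i \cup C^2_i \cup C^3_i$ under the constraint $\abs{D_i} \le 3$, combined with the size and union constraints coming from the three signatures. I expect this to reduce, after using symmetry, to a small number of essentially distinct configurations for the triple of signature-types, and in each I would derive $\abs{D_i} \ge 4$ for some $i$ by a counting argument: the sizes $\abs{C^j_i}$ forced by the signatures are large enough (often $3$), and the ``mixing'' forced by the $\cup$-entries being $4$ means the $C^j_i$ across different $j$ cannot all coincide inside the $3$-element set $D_i$. Concluding, the contradiction in every case establishes that at least four colors appear in one of $Y_1$, $Y_2$, $Y_3$.

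The main obstacle will be the case analysis itself: making sure the symmetry reduction is valid (the figure of $Y$ is symmetric under reversing the order of the three copies and simultaneously swapping $Y_1 \leftrightarrow Y_3$, which matches the symmetry $3_32_44 \leftrightarrow 2_33_43$ of Assertion~\ref{claim-X}), and checking that no clever coloring can satisfy a ``mixed'' triple of options while keeping all three $\abs{D_i} \le 3$. In particular the case where the three copies realize three different options, and the case where two copies realize option~\ref{X34332} (the ``middle-heavy'' one), are the ones I would treat most carefully, since there the large sizes are concentrated on $C^j_1$ and $C^j_2$ and one must argue that $D_1$ and $D_2$ cannot both stay within three colors.
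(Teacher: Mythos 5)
There is a genuine gap: your plan reduces the claim to abstract bookkeeping on the sets $C^j_i \subseteq D_i$ under $\abs{D_i}\le 3$, combined with the signature lower bounds of Assertion~\ref{claim-X}, but these constraints alone are satisfiable and therefore can never produce a contradiction. Indeed, take $D_1=\{1,2,3\}$, $D_2=\{1,2,4\}$, $D_3=\{1,2,5\}$ and $C^j_i=D_i$ for all $i,j$: every copy then has signature $3_43_43$, which dominates both~\ref{X34332} and~\ref{X23343}, while no $D_i$ contains four colors. (Option~\ref{X33244} is excluded for free, since it forces $\abs{C^j_3}\ge4$, but that only tells you each copy satisfies~\ref{X34332} or~\ref{X23343} --- which the assignment above does.) So no case analysis of the kind you outline can close; some geometric input beyond ``$C^j_i\subseteq D_i$'' is indispensable.

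The missing input is that your ``key observation'' misdescribes $Y$: the three copies are \emph{not} aligned identically with the strips $Y_1,Y_2,Y_3$. Only for $X^1$ does the triple $(Y_1\cap X^1,\,Y_2\cap X^1,\,Y_3\cap X^1)$ realize the gadget's three regions. In $X^2$ the strips $Y_1$ and $Y_2$ meet exactly the same set of boxes, i.e.\ $X^2_1=X^2_2$ (one of the gadget's intrinsic regions is missed by all three strips), and symmetrically $X^3_3=X^3_2$ in $X^3$. Since each copy is forced by~\ref{X34332} or~\ref{X23343} to use exactly the three colors of $Y_2$ on its middle region, these two equalities transport those three colors into $Y_1$ (via $X^2$) and into $Y_3$ (via $X^3$); then the fourth color that Assertion~\ref{claim-X} guarantees on $X^1_1\cup X^1_2$ (case~\ref{X34332}) or on $X^1_2\cup X^1_3$ (case~\ref{X23343}) lies outside the colors of $Y_2$ and hence forces a fourth color into $Y_1$ or into $Y_3$. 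Without isolating the equalities $X^2_1=X^2_2$ and $X^3_3=X^3_2$ your argument cannot be completed; note also that the reflection symmetry of $Y$ you invoke (reversing the copies while swapping $Y_1\leftrightarrow Y_3$) does not actually hold for the arrangement as drawn, though the two cases~\ref{X34332} and~\ref{X23343} do play symmetric roles in the correct argument.
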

\begin{proof}
Suppose on the contrary that there is a proper coloring $c$ of $Y$ with at
most three colors in each of $Y_1$, $Y_2$ and $Y_3$. For $i\in\{1,2,3\}$, the
restriction of $c$ to $X^i$ is a proper coloring of $X^i$, which we identify
to $c$.  The condition on $c$ implies that none of $\sigma_{X^1}(c)$,
$\sigma_{X^2}(c)$ and $\sigma_{X^3}(c)$ fulfills inequality~\ref{X33244} of
Assertion~\ref{claim-X}. In particular, note that exactly $3$ different
colours appear on $X_2^1$, and they also appear on $X_2^2$ and on $X_2^3$.
Since $X_1^2=X_2^2$, these three colours appear on $Y_1$.  Similarly, since
$X_3^3=X_2^3$, these three colours appear on $Y_3$.

Assume now that $\sigma_{X^1}(c)$ satisfies~\ref{X34332}.  Then exactly three
colours appear on $X_1^1$, one of which does not appear on $X_2^1$ as
$\abs{c(X_1^1\cup X_2^1)}\ge4$. Thus in total at least four colours appear on
$X_1^1\cup X_1^2\subset Y_1$, which contradicts our assumption on $c$.  It
remains to deal with the case where $\sigma_{X^1}(c)$ fulfills~\ref{X23343} of
Assertion~\ref{claim-X}. Thanks to the symmetry of~\ref{X34332}
and~\ref{X23343} with respect to the regions $X_1$ and $X_3$, the same
reasoning as above applied to $X_3^3$ instead of $X_1^2$ yields that four
colours appear on $Y_3$, a contradiction.
\end{proof}

To finish the construction, we need the following definition. Consider
two copies $Y^1$ and $Y^2$ of $Y$. The regions $Y_i^1$ and $Y_j^2$
\emph{fully overlap} if every box in $Y_i^1$ is in face-to-face
contact with every box in $Y_j^2$. Observe that for every pair
$(i,j)\in\{1,2,3\}$, there exists a $2$-floor arrangement of $Y^1$ and
$Y^2$ such that $Y_i^1$ and $Y_j^2$ fully overlap: it is obtained by
rotating $Y^2$ ninety degrees, adequately scaling it (i.e. stretching it
horizontally) and placing it on top of $Y^1$.

We are now in a position to build the desired arrangement $Z$ spanning
two floors. To this end, we use several copies of $Y$.  The first
floor of $Z$ is composed of seven parallel copies $Y^1,\ldots,Y^7$ of
$Y$ (drawn horizontally in Figure~\ref{fig-Z}).  The second floor of
$Z$ is composed of fifteen parallel copies of $Y$ (drawn vertically in
Figure~\ref{fig-Z}): for each $j\in\{1,2,3\}$ and each
$i\in\{2,\ldots,6\}$, a copy $Y(i,j)$ of $Y$ is placed such that the
first region of $Y(i,j)$ fully overlap the regions
$Y_j^1,\ldots,Y_{j}^{i-1}$, the second region of $Y(i,j)$ fully
overlaps the region $Y_j^i$, and the third region of $Y(i,j)$ fully
overlaps the regions $Y_j^{i+1},\ldots,Y_{j}^{7}$.

\begin{figure}
\begin{tikzpicture}[line width = .4mm,scale=.62,every node/.style={scale=.62}]
\tikzset{BarreStyle/.style = {opacity=.3,line width= 6 mm,color=#1}}
  \foreach \i in {1,...,7} {

  \draw (0,2*\i)--(18.15,2*\i)--(18.15,-1+2*\i)--(0,-1+2*\i)--cycle ;
  \draw node at (-.8,-.5+2*\i) {\large $Y^\i$} ;
  \draw [BarreStyle=red] (.2,1.5+2*\i-2) to (6,1.5+2*\i-2) ;
  \draw [BarreStyle=blue] (6.2,1.5+2*\i-2) to (12,1.5+2*\i-2) ;
  \draw [BarreStyle=green] (12.2,1.5+2*\i-2) to (18,1.5+2*\i-2) ;
}

\tikzset{BarreStyle/.style = {opacity=.3,line width=4 mm,color=#1}}
\foreach \i in {0,1,2} {
  \foreach \j in {0,1,2,3,4} {
    \draw (.5+1.1*\j+6*\i,.5)--(.5+1.1*\j+6*\i,14.5)--(1.25+6*\i+1.1*\j,14.5)--(1.25+6*\i+1.1*\j,.5)--cycle ;
    \draw [BarreStyle=red] (.87+1.1*\j+6*\i,2.3+2*\j) to (.87+1.1*\j+6*\i,0.7) ;
    \draw [BarreStyle=blue] (.87+1.1*\j+6*\i,2.7+2*\j) to (.87+1.1*\j+6*\i,4.3+2*\j) ;
    \draw [BarreStyle=green] (.87+1.1*\j+6*\i,14.3) to (.87+1.1*\j+6*\i,4.7+2*\j) ;

  }
}
\foreach \i in {1,2,3} {
  \foreach \j in {2,...,6} {
  \draw node at (\j*1.1-2.2+.3+.55+\i*6-6,0) {\footnotesize $Y(\j,\i)$} ;
  }
}
\end{tikzpicture}
\caption{Schematic view of the arrangement $Z$.}
\label{fig-Z}
\end{figure}

Consider a proper coloring of $Z$. Assertion~\ref{claim-Y} ensures that each
copy of $Y$ in $Z$ has a region for which at least four different colours are
used.  In particular, there exists $j\in\{1,2,3\}$ such that three regions
among $Y_j^1,\ldots,Y_j^7$ are colored using four colours. Let these regions
be $Y_j^{i_1}$, $Y_j^{i_2}$ and $Y_{j}^{i_3}$ with $1\le i_1<i_2<i_3\le7$.
Now, consider the arrangement $Y(i_2,j)$. By Assertion~\ref{claim-Y}, there
exists $k\in\{1,2,3\}$ such that the $k$-th region of $Y(i_2,j)$ is also
colored using at least four different colors.  Consequently, as this region
and the region $Y_j^{i_k}$ fully overlap, they are colored using at least
eight different colors. This concludes the proof.

\section{Bounds with respect to geometrical measures}
In this part, we provide bounds on the chromatic number of boxes arrangements
provided that the boxes satisfy some geometrical constraints. Namely, we prove
Theorem~\ref{thm:geom}, which is recalled here for the reader's ease.
\begin{theo2}
We consider a box arrangements $A$ with integer
coordinates.
\begin{enumerate}
   \item If there exists one fixed dimension such that every box in $A$ has
      length at most $\ell$ in this dimension, then $A$ has chromatic number
      at most $4(\ell+1)$.\label{toutes}
   \item If for each box, there is one dimension such the length of this box
      in this dimension if at most $\ell$, then $A$ has chromatic number at
      most $12(\ell+1)$.\label{une}
   \item If the total surface area of each box in $A$ is at most $s$, then $A$
      has chromatic number at most $9\sqrt[3]{4s}+13$.\label{surface}
   \item If the volume of each box in $A$ is at most $v$, then $A$ has
      chromatic number at most $24\sqrt[4]{6v}+13$.\label{volume}
\end{enumerate}
\end{theo2}
\begin{proof}\mbox{}

\noindent
\ref{toutes} The conflict graph corresponding to an arrangement where the
boxes have height at most $\ell$ can be vertex partitioned into $\ell+1$
planar graphs $P_0,\ldots,P_\ell$.  Indeed if the distance between the levels
of two boxes is at least $\ell+1$, then these two boxes are not adjacent. So
the planar graphs are obtained by assigning, for each $x$, all the boxes that
have their floor at level $x$ to be in the graph $P_k$ where $k\coloneqq x
\mod (\ell+1)$. Consequently, the whole conflict graph has chromatic number at
most $4(\ell+1)$.

\smallskip
\noindent\ref{une} The boxes can be partitioned into three sets according to
the dimension in which the length is bounded. In other words, $A$ is
partitioned into $U_1$, $U_2$ and $U_3$ such that for each $i\in\{1,2,3\}$,
all boxes in $U_i$ have length at most $\ell$ in dimension $i$.
Consequently,~\ref{toutes} ensures that each of $U_1$, $U_2$ and $U_3$ has
chromatic number at most $4(\ell+1)$ and, therefore, $A$ has chromatic number
at most $3\cdot 4(\ell+1)=12(\ell+1)$.

\smallskip
\noindent\ref{surface} For each box, the minimum length taken over all three
dimensions is at most $\sqrt{s}$, and thus~\ref{une} implies that the
chromatic number of $A$ is $\bigo{\sqrt{s}}$.  However, one can be more
careful. Let us fix a positive integer $\ell$, to be made precise later.  The
set of boxes is partitioned as follows. Let $U$ be the set of boxes with
lengths in every dimension at least $\ell$ and let $R$ be the set all
remaining boxes, that is, $R\coloneqq A\setminus U$.  By~\ref{une}, the
arrangement $R$ has chromatic number at most $12\ell$.  Now consider a box $B$
in $U$ with dimensions $x$, $y$ and $z$, each being at least $\ell$. We shall
give an upper bound on the number of boxes of $U$ that can be adjacent to $B$.
The surface of a face of a box in $U$ is at least $\ell^2$.  So in $U$ there
are at most $s/\ell^2$ that have a face totally adjacent to a face of $B$.
Some boxes of $U$ could also be adjacent to $B$ without having a face totally
adjacent to a face of $B$. In this case, such a box is adjacent to an edge of
$B$. For an edge of length $w$, there are at most $w/\ell+1$ such boxes. So
the number of boxes of $U$ adjacent to $B$ but having no face totally adjacent
to a face of $B$ is at most $4(x+y+z)/\ell+12$. Since $\ell\le\min\{x,y,z\}$,
we deduce that $2\ell(x+y+z)\le 2xy+2yz+2xz\le s$.  Hence the total number of
boxes in $U$ that are adjacent to $B$ is at most
$s/\ell^2+2s/\ell^2+12=3s/\ell^2+12$.  Consequently, by degeneracy, $U$ has
chromatic number at most $3s/\ell^2+13$. Therefore, $A$ has chromatic number
at most $12\ell+3s/\ell^2+13$.  Setting $\ell\coloneqq\sqrt[3]{s/2}$ yields
the upper bound $9\sqrt[3]{4.s}+13$.

\smallskip
\noindent\ref{volume} Once again, for a fixed parameter $\ell$ to be made
precise later, the set of boxes is partitioned into two parts: the part $U$,
composed of all the boxes with lengths in every dimension at least $\ell$ and
the part $R$, composed of all the remaining boxes.  By~\ref{une}, we know that
$R$ has chromatic number at most $12\ell$.  Let $B$ be a box in $U$ with
dimensions $x$, $y$ and $z$.  Since $\ell\le \min\{x,y,z\}$, the volume $v_B$
of $B$ satisfies that $6v\ge6v_B=6xyz\ge2(\ell xy+\ell xz+\ell yz)=\ell s_B$,
where $s_B$ is the total surface area of $B$.  So every box in $U$ has total
surface area at most $6v/\ell$ and thus~\ref{surface} implies that $U$ has
chromatic number at most $9\sqrt[3]{4.6v/\ell}+13$.  Therefore, $A$ has
chromatic number at most $9\sqrt[3]{24v/\ell}+12\ell+13$. Setting $\ell$ to be
$\sqrt[4]{3v/8}$ yields the upper bound $24\sqrt[4]{6v}+13$.
\end{proof}

In the previous theorem, we are mainly concerned with the order of magnitude
of the functions of the different parameters. However, even in this context,
we do not have any non trivial lower bound on the corresponding chromatic
numbers.

\section*{Acknowledgments.} The third author thanks Louigi Addario-Berry,
Frédéric Havet, Ross Kang, Colin McDiarmid and Tobias Müller for stimulating
discussions on the topic of this paper during a stay in Oxford, in 2005.

\end{document}